\documentclass[12pt,electronic]{amsart}                                                 
\usepackage[T1]{fontenc}
\usepackage[applemac]{inputenc}
\usepackage[english]{babel}
\usepackage[small,nohug]{diagrams}
\usepackage{url}
\usepackage[hmargin=3.7cm,vmargin=3.5cm]{geometry} 
\usepackage{amsmath}
\usepackage{amsthm}                      
\usepackage{amssymb}
\usepackage{amscd}     
\usepackage{mathrsfs}
\usepackage{enumerate}
\usepackage{tikz}
\usetikzlibrary{matrix}

\usepackage[small,it]{caption}
\theoremstyle{plain}                                                           
\newtheorem{thm}{Theorem}[section]
\newtheorem{lem}[thm]{Lemma}
\newtheorem{prop}[thm]{Proposition}
\newtheorem{cor}[thm]{Corollary}
\theoremstyle{definition}
\newtheorem{ex}[thm]{Example}
\newtheorem{rem}[thm]{Remark}

\parindent=0pt 
\parskip=10pt

\DeclareMathOperator{\SL}{SL}

\DeclareMathOperator{\Sym}{Sym}

\DeclareMathOperator{\Aut}{Aut}                          

\DeclareMathOperator{\sgn}{sgn}
\DeclareMathOperator{\Ind}{Ind}

\newcommand{\gr}{\mathfrak{gr}}
\newcommand{\ud}{\mathrm{d}}
\newcommand{\field}[1]{\ensuremath{\mathbf{#1}}}
\newcommand{\R}{\ensuremath{\field{R}}}        
\newcommand{\Q}{\ensuremath{\field{Q}}}        
\newcommand{\C}{\ensuremath{\field{C}}}
\newcommand{\sym}{\ensuremath{\mathbb{S}}}
\newcommand{\Z}{\ensuremath{\field{Z}}}

\newcommand{\st}{\; | \;}
\newcommand{\M}{\mathcal{M}}
\newcommand{\MM}{\overline{\mathcal{M}}}
\renewcommand{\S}{\mathbf{S}}

\newcommand{\V}{\mathbb{V}}

\newcommand{\pr}{\mathrm{pr}}
\renewcommand{\d}{\mathrm{d}}

\title{The structure of the tautological ring in genus one}
\author{Dan Petersen}                      
\thanks{The author is supported by the G\"oran Gustafsson Foundation for Research in Natural Sciences and Medicine.}
\email{danpete@kth.se}
\address{Institutionen f\"or matematik \\ KTH Royal Institute of Technology \\ 100 44 Stockholm \\Sweden}
\begin{document} 
  
 \maketitle

\begin{abstract} We prove Getzler's claims about the tautological ring of stable curves of genus one, i.e.\ that the tautological ring coincides with the even cohomology ring, and that all relations between generators follow from the WDVV relation and Getzler's relation.  \end{abstract}


\setcounter{section}{-1}

\section{Introduction}

The tautological ring $R^\bullet(\M_g)$ of the moduli space of curves was introduced by Mumford in the seminal \cite{mumfordtowards}. He proposed, as a first step towards understanding the Chow ring of the moduli space (probably an impossible task), to study some subring of the Chow ring still containing most ``geometrically natural'' classes. The tautological ring $R^\bullet(\M_g)$ was his suggested such subring. The definition of tautological ring was later extended to spaces of curves with marked points, and to (partial) compactifications, see e.g.\ the elegant definition in \cite{relativemaps}.

The tautological ring turned out to have a surprisingly rich structure, see in particular Faber's series of conjectures \cite{faberconjectures} which roughly say that $R^\bullet(\M_g)$ behaves like the even cohomology ring of a smooth projective variety of dimension $g-2$. Most important is perhaps the Gorenstein conjecture, which asserts that Poincar\'e duality holds in the tautological ring. Analogues of these conjectures have since been formulated also for marked points and (partial) compactifications \cite{pandharipandequestions}.  

As the tautological ring is defined in terms of explicit generators, understanding the tautological ring amounts to the same thing as understanding the relations between these generators. Another reason to be interested in relations between tautological classes is that every such relation translates into a universal partial differential equation for generating series of Gromov--Witten invariants; this was the original application of Getzler's relation on $\MM_{1,4}$ in \cite{ellipticgw}. The resulting differential equation was also key to Dubrovin and Zhang's proof of the genus one Virasoro conjecture in the semisimple case \cite{dubrovinzhang1,dubrovinzhang2}. A naive approach to proving also the non-semisimple case would be to find more genus one relations --- the results of this paper show that this is impossible, as there are none. 

  In genus zero, the tautological ring coincides with the cohomology ring of $\MM_{0,n}$, studied by Keel in \cite{keel}. It turns out that it is generated by boundary classes, and all relations between the generators arise from the WDVV relation (i.e.\ the fact that all three boundary points on $\MM_{0,4} \cong \mathbf P^1$ are linearly equivalent).  In \cite{ellipticgw}, Getzler  announced without proof a precise result describing the structure of the tautological ring in genus one. The later preprint \cite{nilpotent} contains computations of the operadic cohomology groups of certain modules over the Lie operad, and a claim that these computations will be used in his forthcoming proof. Nevertheless, a complete proof has not yet appeared. Moreover, several results appear in the literature that are true only modulo Getz\-ler's claims, e.g. 
\cite[Theorem 3]{graberpandharipande}, \cite[Theorem 2]{dsv}, \cite[Section 3.6]{belorousski}, \cite[Theorem 6.15]{pagani1}.
 
In this article we give a complete proof of the results announced in \cite{ellipticgw}, i.e.\ the following assertions:
\begin{enumerate}
\item The even dimensional cohomology of $\MM_{1,n}$ is generated additively by cycle classes of boundary strata. 
\item All relations among these generators are given by the WDVV equation and Getzler's relation.
\end{enumerate}

A more precise statement is the following. Consider the free $\Q$-vector space $F_0$ spanned by all stable $n$-pointed dual graphs of genus one. Then there is a surjection $F_0 \to H^{2\bullet}(\MM_{1,n})$ which maps a dual graph to the cycle class of the corresponding boundary stratum. The second statement determines $F_1 = \ker( F_0 \to H^{2\bullet}(\MM_{1,n}))$. For every genus zero vertex in a dual graph, and every choice of four incident half-edges, we get two relations between graphs of codimension one more by pulling back the WDVV relations from $\MM_{0,4}$. Similarly we get a codimension two relation for each genus one vertex with four incident half-edges using Getzler's relation. The claim is that these elements span $F_1$. 

It follows from these two results that the tautological ring is isomorphic to the even cohomology. Indeed, since the even cohomology is spanned by strata, the map $R^\bullet(\MM_{1,n}) \to H^{2\bullet}(\MM_{1,n})$ is surjective. Since the only relations between generators are WDVV and Getzler's, and the latter is known to be a rational equivalence by \cite{geometricgetzler}, we see that the map is also injective. We note in particular that the tautological ring is Gorenstein, by Poincar\'e duality. 

The main tools in the proof are Deligne's mixed Hodge theory, and the Eichler--Shimura theory which determines the cohomology of local systems on $\M_{1,1}$. The proof naturally splits into three steps, and therefore so does this paper. In the first section of this paper, we use the filtration of $\MM_{1,n}$ coming from the stratification by topological type to reduce the claims to an assertion about the cohomology of the interior $\M_{1,n}$, and its weight filtration. In the second section, we use the Leray spectral sequence for $\pi \colon \M_{1,n} \to \M_{1,1}$ and the Hodge-theoretic version of the Eichler--Shimura isomorphism to reduce this statement, in turn, to one about the cohomology of a single fiber of $\pi$, i.e.\ the configuration space $F(E,n)/E$ of $n$ points on an elliptic curve modulo translation. In the third section, we prove the required assertions about the cohomology of $F(E,n)/E$. Our proof uses the obvious isomorphism 
\[ F(E,n)/E \cong F(E \setminus \{0\},n-1),\]
and an analysis of the Cohen--Taylor spectral sequence computing the $\sym_{n-1}$-equivariant cohomology of $F(E \setminus \{0\},n-1)$. The above isomorphism implies that the resulting representations of $\sym_{n-1}$ must be restrictions of representations of $\sym_n$, which crucially allows us to deduce certain vanishing results in cohomology. 

It seems likely that it is only in the third step that our proof diverges from Getzler's unpublished one.  It appears from \cite{nilpotent} that he approaches the cohomology of $F(E,n)/E$ via a reinterpretation of the sequence of Cohen--Taylor complexes, as $n$ varies, as the differential graded $\sym$-module computing the operadic cohomology of a certain (left) module over the Lie operad. To determine the cohomology groups that we are interested in, it would suffice to approximate this module by its 2-step truncation, i.e.\ discard all terms in arity greater than $2$. This truncation is strongly related to Getzler's module $\mathsf{L}_\mathsf{H}$, and Getzler's \cite[Theorem 6.3]{nilpotent} and our Proposition \ref{p2} should play the same role in both proofs.

A remark is that we work over the complex numbers throughout for simplicity, and all cohomology groups are taken with rational coefficients unless stated otherwise. However, our methods are purely algebraic, and the results extend to positive characteristic by replacing all references to the Hodge-theoretic weight filtration with the natural weight filtration on $H^\bullet_\textrm{\'et}(\MM_{1,n}\otimes \mathbf{ \overline F}_p,\Q_\ell)$, replacing the Hodge-theoretic Eichler--Shimura isomorphism with its $\ell$-adic analogue, et cetera. 

Finally, let me also say that this paper owes an obvious intellectual debt to the work of Ezra Getzler, not only in the statement of the theorem but also its proof. Moreover, I benefitted greatly from a helpful conversation with him at the Park City Mathematics Institute in 2011.

\section{The filtration by topological type}\label{first}
In this section, we show by a spectral sequence argument that Getzler's claims are implied by statements about the weight filtration of the cohomology of the interior $\M_{1,n}$. We state these results on the weight filtration in this section without proof; their proofs will occupy the remainder of the paper.

Let $X = T_n \supset T_{n-1} \supset \cdots \supset T_{-1} = \varnothing$ be a filtered algebraic variety, with $T_p$ closed in $X$ for all $p$. Then there is a spectral sequence \cite[Lemma 3.8]{arapura}
\[ E_1^{pq} = H^{p+q}_c(T_p \setminus T_{p-1}) \implies H^{p+q}_c(X)\]
whose differentials are compatible with the natural mixed Hodge structures on all terms. When $n=1$, we simply have a variety $X$ with a closed subset $T_0$, and the spectral sequence coincides with the long exact sequence of mixed Hodge structures \cite[Corollary 5.51]{peterssteenbrink}
\[ \cdots \to H^\bullet_c(X\setminus T_0) \to H^\bullet_c(X) \to H^\bullet_c(T_0) \to H^{\bullet+1}_c(X \setminus T_0) \to \cdots,\]
with the connecting homomorphism giving the differential.
The general case can be deduced from the case $n=1$, in a similar way as the spectral sequence associated to a filtered complex can be constructed by iteratively applying the long exact sequence associated to a complex with a two-term filtration. (The latter being just the snake lemma.)  

We apply this spectral sequence to the case where $X = \MM_{1,n}$ and $T_p$ is the union of all strata of dimension at most $p$ in the stratification by topological type. Observe that $T_p \setminus T_{p-1}$ is the disjoint union of all strata of dimension $p$. Each stratum is given by 
\[ \M(\Gamma) = \left(\prod_{v \in \mathrm{Vert}(\Gamma)} \M_{g(v),n(v)} \right)\big/ \Aut(\Gamma),\]
where $\Gamma$ is the dual graph of the stratum. 

Now since $\MM_{1,n}$ is smooth and proper, we have that $H^{p+q}_c(\MM_{1,n}) = H^{p+q}(\MM_{1,n})$ is pure of weight $p+q$. It follows then that only $\gr^W_{p+q} E_1^{pq}$ can survive to the $E_\infty$ page of the spectral sequence. Moreover, since $H^k_c(-)$ always has weight \emph{at most} $k$, we see that to determine $\gr^W_{p+q}H^{p+q}_c(\M_{1,n})$, we need only to determine $\gr^W_i H^i_c \M_{0,n}$ and $\gr^W_{i} H^i_c \M_{1,n}$ for all $i$. 
By Poincar\'e duality, this is the same as determining $\gr^W_i H^i \M_{0,n}$ and $\gr^W_{i} H^i \M_{1,n}$.

In genus zero, the weights of the cohomologies behave in a simple way. The cohomology group $H^i\M_{0,n}$ is pure of type $(i,i)$ (and so has weight $2i$). This is a general fact about complements of arrangements of hyperplanes in $\C^N$ \cite{groupesdetresses,hyperplanepurity}; note that 
\[ \M_{0,n} \cong \{ (x_1,\ldots,x_{n-3}) \st x_i \neq x_j \text{ for any } i \neq j; x_i \neq 0, x_i \neq 1 \text{ for all } i\} \subset \C^{n-3}.\]
In particular we have that
\[\gr^W_i H^i \M_{0,n} \neq 0 \qquad \iff \qquad i=0.\]
In genus one, the corresponding statement is false, since e.g.\ the cusp form classes in $H^{11}(\M_{1,11})$ have weight $11$. However, we shall prove the following result:
\begin{thm} \label{thma} $\gr^W_{2i}H^{2i}\M_{1,n}\neq 0$ if and only if $i=0$. \end{thm}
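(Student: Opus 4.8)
The plan is to study $\M_{1,n}$ through the forgetful fibration $\pi\colon \M_{1,n}\to \M_{1,1}$, whose fibre over a curve $[E]$ is the configuration space $F(E,n)/E\cong F(E\setminus\{0\},n-1)$. The case $i=0$ is immediate, since $H^0(\M_{1,n})=\Q$ is pure of weight $0$; so I would concentrate on proving $\gr^W_{2i}H^{2i}(\M_{1,n})=0$ for $i\ge 1$. Because $\M_{1,1}$ is rationally cohomologically one-dimensional (its coarse space is the affine $j$-line), Artin vanishing gives $H^p(\M_{1,1},\V)=0$ for every admissible variation $\V$ and every $p\ge 2$. Hence the Leray spectral sequence of $\pi$ degenerates at $E_2$, and $H^{2i}(\M_{1,n})$ carries a two-step filtration with graded pieces $H^0(\M_{1,1},R^{2i}\pi_*\Q)$ and $H^1(\M_{1,1},R^{2i-1}\pi_*\Q)$, both morphisms of mixed Hodge structure. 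It therefore suffices to show that neither piece contributes in weight $2i$ once $i\ge1$.

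To control the weights I would decompose each $R^q\pi_*\Q$ into isotypic summands $\V_k\otimes\Q(-c)$, where $\V$ is the standard weight-one variation on $\M_{1,1}$ (the first cohomology of the universal elliptic curve) and $\V_k=\Sym^k\V$ is the variation underlying modular forms of weight $k+2$. As $R^q\pi_*\Q$ is a summand of the cohomology of a smooth variety in degree $q$, every such summand has weight $k+2c\ge q$. Now I apply the Hodge-theoretic Eichler--Shimura isomorphism: $H^0(\M_{1,1},\V_k)=0$ unless $k=0$; $H^1(\M_{1,1},\V_k)=0$ for $k$ odd; for even $k\ge 2$ it splits into a cuspidal part, pure of weight $k+1$, and a one-dimensional Eisenstein part of weight $2k+2$; and $H^1(\M_{1,1},\Q)=0$. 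A weight count now eliminates the $p=1$ piece entirely. On a summand $\V_k\otimes\Q(-c)$ of $R^{2i-1}\pi_*\Q$ the cuspidal contribution has weight $(k+1)+2c$, which is odd (as $k$ is even) and so never equals $2i$; the Eisenstein contribution has weight $2k+2+2c=(k+2c)+k+2\ge(2i-1)+k+2>2i$ for $k\ge 2$, while $k=0$ yields no $H^1$. Thus $\gr^W_{2i}H^{2i}(\M_{1,n})$ is supported entirely in Leray degree $p=0$, where only the trivial summands $\V_0\otimes\Q(-c)$ survive, contributing in weight $2c$; weight $2i$ forces $c=i$, i.e.\ the summand of minimal weight sitting inside the pure part.

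The problem is therefore reduced to a single statement about the fibre: the trivial sub-variation does not occur in $\gr^W_{2i}H^{2i}(F(E,n)/E)$ for $i\ge 1$, equivalently the $\SL(2,\Z)$-invariants of the pure part of the fibre cohomology vanish in positive degree. As a sanity check this is already visible for $n=3$, where the fibre is $E^2$ with three translates of $E$ removed: one checks that the three diagonal classes already span the space of $\SL_2$-invariant classes in $H^2(E^2)$, so that the pure part of $H^2$ of the fibre is precisely $\V_2$ and carries no trivial summand.

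The heart of the matter, and the step I expect to be hardest, is this vanishing for general $n$ and $i$. Here I would use $F(E,n)/E\cong F(E\setminus\{0\},n-1)$ and analyse the Cohen--Taylor spectral sequence computing $H^\bullet(F(E\setminus\{0\},n-1))$, tracking the $\sym_{n-1}$-action permuting points alongside the monodromy action on $\V$. The pure part is the image of $H^\bullet(E^{n-1})$, i.e.\ the diagonal-free part of the spectral sequence, and one must show its $\SL_2$-invariants die in positive degree. The decisive leverage is that, since $F(E,n)/E$ carries the full symmetric group $\sym_n$, the resulting $\sym_{n-1}$-representations are \emph{restrictions} of $\sym_n$-representations; combined with the multiplicities forced by the diagonal classes, this restriction constraint should force the trivial local system to cancel out of the pure cohomology for every $i\ge 1$. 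Making this representation-theoretic cancellation precise is the crux of the argument, and I would expect it to occupy the final section.
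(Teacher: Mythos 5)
Your reduction is correct and is essentially the paper's Section 2: the Leray spectral sequence for $\pi\colon\M_{1,n}\to\M_{1,1}$ degenerates since the base is an affine curve, the fibrewise cohomology decomposes into summands $\V_k(-c)$ with $k+2c\geq q$ by smoothness of the fibre, and the Eichler--Shimura weights (cuspidal part odd, Eisenstein part of weight $2k+2\geq k+4$, no weight-two Eisenstein series) kill the $H^1(\M_{1,1},-)$ column in even weight $2i\leq q+2$. This is exactly Lemma \ref{weightlem} and Proposition \ref{red}, and it correctly reduces the theorem to the vanishing of $\SL_2(\Z)$-invariants in the pure even cohomology of the fibre, i.e.\ Proposition \ref{p1}. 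Your $n=3$ sanity check is also right.

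The gap is at the crux, which you name but do not prove, and the mechanism you propose for it would not suffice. You suggest that the constraint that the $\sym_{n-1}$-representations on $H^\bullet(F(E\setminus\{0\},n-1))$ are restrictions of $\sym_n$-representations should ``force the trivial local system to cancel.'' But the trivial representation of $\sym_{n-1}$ is itself the restriction of the trivial representation of $\sym_n$, so the restriction constraint can never, by itself, rule out an $\SL_2(\Z)$-invariant (i.e.\ trivial local system) class; in the paper this trick is reserved for the odd-degree statement (Theorem \ref{thmb}, via Proposition \ref{p2}), where the candidate survivors are the specific modules $V_{3,2^{k-1},1}$ and $V_{4,2^{k-1}}$, which happen not to be restrictions. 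What actually closes the even case is different: after reducing to $E^{p,0}_U(p)=H^1(U)^{\otimes p}$ by stability (Proposition \ref{hejhej}), one observes that any bottom-row class fixed by a transposition $(ij)$ is divisible by the diagonal class, $\alpha=\pr_{ij}^\ast(\Delta)\cdot\beta$, hence is the coboundary $\d(\omega_{ij}\beta)$ in the Cohen--Taylor complex; by Schur--Weyl duality only $\Sym^pH^1(U)\otimes\sgn_p$ can then survive in cohomology, and $\Sym^pH^1(U)=\V_p$ has no $\SL_2(\Z)$-invariants for $p>0$. Your phrase ``multiplicities forced by the diagonal classes'' gestures in this direction, but without the transposition-implies-coboundary argument (or some substitute computing the actual kernel of the differential) the proposed proof does not go through.
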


In other words, the only \emph{even} pure cohomology classes in genus zero and one are those that are pure for trivial reasons, i.e.\ the fundamental classes. It follows that the same holds for each stratum, i.e.\ $\gr^W_{2i} H^{2i} \M(\Gamma) \neq 0 \iff i=0$. 
So when $p+q$ is even, we have that $E_1^{pq}$ is pure of weight $p+q$ along the diagonal $p=q$ and has lower weight otherwise, so that $H^{2p}_c(\MM_{1,n}) \cong E_\infty^{p,p}$. Since there are no nonzero classes on the $E_1$ page above the diagonal $p=q$, the spectral sequence furnishes us with boundary maps 
\[ E_{1}^{p,p} = H^{2p}_c(T_p \setminus T_{p-1}) \hookleftarrow E_\infty^{p,p} \stackrel \sim \leftarrow H^{2p}_c(\MM_{1,n}).\]
Since both spaces are smooth, this dualizes to a Gysin map which is therefore surjective, 
\[ H^0(T_p \setminus T_{p-1}) \to H^{2n-2p}(\MM_{1,n}). \]
The image of the fundamental class of a stratum under this map is the usual cycle class of (the closure of) the stratum. We deduce the following corollary:

\begin{cor}The even cohomology, $H^{2\bullet}(\MM_{1,n})$, is spanned by cycle classes of strata.\end{cor}

We want to understand also the space of relations between these generators, so we need to keep track of what happens to $E^{p,p}_1$ throughout the spectral sequence. We claim now the following:
\begin{enumerate}[(I)]
\item the only nontrivial differentials emanating from the diagonal $E^{p,p}_r$ occur at the $E_1$ and $E_2$ pages;
\item the strata classes that are killed on the $E_1$ page are related by an application of the WDVV equation;
\item the strata classes that are killed on the $E_2$ page are related by an application of Getzler's relation.
\end{enumerate} 
In fact, by thinking about how the spectral sequence is defined, one sees that a codimension $r$ relation should correspond to a differential on the $E_r$ page, so the above results are completely expected. 

Note that the differential on the $r$th page starting at $E^{p,p}_r$ lands in 
\[ \gr^W_{2p} E_1^{p+r,p-r-1} = \gr^W_{2p} H^{2p-1}_c(T_{p+r} \setminus T_{p+r-1}), \]
which is Poincar\'e dual to $\gr^{W}_{2r}H^{2r-1}(T_{p+r} \setminus T_{p+r-1})$. We thus wish to show that the latter space is nonzero only when $r \in \{1,2\}$. 

The result on the weights of the cohomology of $\M_{0,n}$ quoted above shows that 
$\gr^W_{i+1}H^i\M_{0,n}$ is nonzero if and only if $i=1$ (and $n\geq 4$). We will prove the following theorem:

\begin{thm} \label{thmb}$\gr^{W}_{2i} H^{2i-1} \M_{1,n}\neq 0$ if and only if $i = 2, n \geq 4$. \end{thm}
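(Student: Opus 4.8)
The plan is to study the cohomology of $\M_{1,n}$ by means of the Leray spectral sequence of the projection $\pi\colon\M_{1,n}\to\M_{1,1}$ remembering the elliptic curve together with one of its marked points. Since $\SL_2(\Z)$ has virtual cohomological dimension one, every local system on $\M_{1,1}$ has vanishing rational cohomology in degrees $\geq 2$; hence the spectral sequence degenerates at $E_2$ and, as a filtered mixed Hodge structure, $H^{2i-1}(\M_{1,n})$ is an extension of $H^0(\M_{1,1},R^{2i-1}\pi_\ast\Q)$ by $H^1(\M_{1,1},R^{2i-2}\pi_\ast\Q)$. Because $\gr^W_{2i}$ is exact on mixed Hodge structures, it is enough to compute the weight $2i$ part of each of these two terms.

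Next I would decompose the higher direct images. Each $R^q\pi_\ast\Q$ is an admissible variation of mixed Hodge structure built out of the standard weight-one variation $\V$ on $\M_{1,1}$, so it is a direct sum of Tate twists $\Sym^k\V(-j)$. Feeding this into the Hodge-theoretic Eichler--Shimura isomorphism, $H^1(\M_{1,1},\Sym^k\V)$ splits (for $k$ even $\geq 2$) into a cuspidal part, pure of the odd weight $k+1$, and a one-dimensional Eisenstein part isomorphic to $\Q(-k-1)$ of weight $2k+2$, while $H^0(\M_{1,1},\Sym^k\V)$ is $\Q$ for $k=0$ and vanishes for $k>0$. A weight count now disposes of the $H^1$ term: a summand $\Sym^k\V(-j)$ of $R^{2i-2}\pi_\ast\Q$ has weight $k+2j\geq 2i-2$, its cuspidal contribution to $H^1$ has the odd weight $k+1+2j\neq 2i$, and its Eisenstein contribution reaches weight $2i$ only when $j=i-k-1$, which then forces $k+2j=2i-k-2<2i-2$ for $k\geq 2$, contradicting the weight bound. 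Thus $\gr^W_{2i}$ of the $H^1$ term is zero, and only the monodromy invariants $H^0(\M_{1,1},R^{2i-1}\pi_\ast\Q)$ contribute; their weight $2i$ part is exactly the multiplicity of the trivial summand $\Q(-i)$ in the fibre cohomology.

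Via the isomorphism $F(E,n)/E\cong F(E\setminus\{0\},n-1)$, the theorem is therefore equivalent to the following statement about a single fibre $F=F(E\setminus\{0\},n-1)$: the space of $\SL_2(\Z)$-invariant weight $2i$ classes in $H^{2i-1}(F)$ --- equivalently the multiplicity of $\Q(-i)$ --- is nonzero precisely when $i=2$ and $n\geq 4$. For the non-vanishing at $i=2$ I would exhibit an explicit class: the diagonals contribute weight-two Tate classes $G_{ab}\in H^1(F)$, and the symplectic pairing on $\V$ produces invariant weight-two Tate classes $\omega_{ab}\in H^2(F)$ from the first cohomology of distinct factors, so that a symmetrization of the products $G_{ab}\,\omega_{cd}$ gives an invariant weight-four class in $H^3(F)$; one checks this is nonzero as soon as there are at least three moving points, i.e.\ $n\geq 4$ (for $n=3$ the fibre is an affine surface, so $H^3=0$). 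The vanishing for all $i\neq 2$ is the substantial point, and I would extract it from the Cohen--Taylor spectral sequence computing the $\sym_{n-1}$-equivariant cohomology of $F$, using crucially that $F\cong F(E,n)/E$ carries an $\sym_n$-action: the $\sym_{n-1}$-representations occurring must be restrictions of $\sym_n$-representations, and this constraint forces the invariant Tate part in the relevant weight to vanish outside the single degree.

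The main obstacle is precisely this fibre-level computation. The reductions in the first two paragraphs are essentially formal once Eichler--Shimura and the weight bounds are available, and the produced class at $i=2$ is concrete. By contrast, ruling out invariant weight-$(2i)$ Tate classes in $H^{2i-1}(F)$ for every other $i$ requires genuine control of the full mixed Hodge structure on the cohomology of the configuration space, and it is exactly here that the restriction-from-$\sym_n$ mechanism --- the device that converts the extra symmetry of $F(E,n)/E$ into vanishing --- does the essential work.
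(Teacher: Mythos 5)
Your overall route is the paper's route: your first two paragraphs reproduce Section~2 of the paper almost verbatim (the degenerate Leray sequence for $\pi\colon\M_{1,n}\to\M_{1,1}$ is the sequence \eqref{exactseq}, your Eichler--Shimura weight count is Lemma~\ref{weightlem}, and the conclusion that only the monodromy invariants can carry even weight $\leq q+2$ is Proposition~\ref{red}), and your fourth paragraph announces exactly the mechanism of Proposition~\ref{p2}. But announcing it is where you stop, and that deferred step is not a technical verification --- it \emph{is} the theorem. Concretely, what is missing is: Totaro's description of the Cohen--Taylor $E_2$ page and its compatibility with weights; the stability reduction (Proposition~\ref{hejhej}) cutting the problem down to $E^{p,1}_U(p+2)$; the splitting of $E^{p,1}_U(p+2)$ into the two induced representations $\Ind_{\sym_2\times\sym_p}^{\sym_{p+2}}H^0(U)\boxtimes H^p(U^p)$ and $\Ind_{\sym_2\times\sym_p}^{\sym_{p+2}}H^1(U)\boxtimes H^{p-1}(U^p)$; the Schur--Weyl and Pieri analysis isolating $V_{3,2^{k-1},1}\oplus V_{4,2^{k-1}}$ after the trivial-isotypic part is killed by the differential; and finally the branching-rule check that these $\sym_{p+2}$-modules are not restrictions of $\sym_{p+3}$-modules --- the step where the extra symmetry of $F(E,n)/E$ actually bites. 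None of this is formal, and your proposal gives no indication of how the ``restriction-from-$\sym_n$'' constraint would be implemented beyond naming it. Two smaller points in the reduction: you assert $R^q\pi_\ast\Q$ is literally a direct sum of twists $\Sym^k\V(-j)$, which is Gorinov's theorem and is not needed --- applying the exact functor $\gr^W$ to \eqref{exactseq} and using only the graded decomposition \eqref{dec} suffices, as in Proposition~\ref{red}; and your Eisenstein weight count only covers $k\geq 2$, so you must separately record that $H^1(\M_{1,1},\Q(-j))=0$ (no Eisenstein series of weight two) to dispose of $k=0$.

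Your nonvanishing argument at $i=2$ is moreover broken in its details. There are no weight-two Tate classes $G_{ab}\in H^1(F(U,m))$: in the Cohen--Taylor algebra one has $\d\omega_{ab}=\pr_{ab}^\ast\Delta$, and since $H^2(U)=0$ the restricted diagonal class is the symplectic K\"unneth class $\delta\in H^1(U)\otimes H^1(U)$, so the classes $\pr_{ab}^\ast\Delta$ for distinct pairs lie in distinct K\"unneth summands of $H^2(U^m)$ and are linearly independent; hence no nonzero combination of the $\omega_{ab}$ is closed and $\gr^W_2H^1F(U,m)=0$ (visible as the vanishing entry $E_\infty^{0,1}$ in Figure~2). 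Dually, your invariant classes $\pr_{cd}^\ast\delta\in H^2$ are exact, being precisely $\d\omega_{cd}$ (again visible in Figure~2, where $3\Q(-1)$ disappears from $E^{2,0}$). So the product $G_{ab}\,\omega_{cd}$ does not exist in $H^\bullet(F)$; the genuine invariant weight-four class lives in $E^{2,1}$ as a combination $\sum c_{ab,cd}\,\omega_{ab}\,\pr_{cd}^\ast\delta$ that is closed modulo the Arnold relations but not exact, and checking that it survives to $E_\infty$ --- it is Getzler's class --- is exactly the content one must verify. The paper does this by the direct hand computation of Example~\ref{ex} for $F(U,3)$ and then propagates to all $n\geq 4$ via the injectivity of $E_U(3)\hookrightarrow E_U(n)$ on cohomology; some such computation is unavoidable, and your ``one checks this is nonzero'' does not substitute for it.
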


Together, these two bounds on the weight imply claim (I) above. Moreover, statement (II) now follows from the corresponding claim in genus zero. Indeed, the statement that all relations between boundary cycle classes in $H^\bullet \MM_{0,n}$ are given by the WDVV relation can be reformulated as asserting that $H^1\M_{0,n}$ is spanned additively by classes pulled back from $H^1 \M_{0,4}$ \cite[Theorem 7.3]{kontsevichmanin}\cite{getzler94}, and we have seen that all codimension one relations also in $g=1$ arise from $H^1\M_{0,n}$. The corresponding statement that needs to be shown in genus one is the following:

\begin{thm} \label{thmc} $\gr^W_4 H^3 \M_{1,n} $ is spanned additively by classes pulled back from $\gr^W_4 H^3 \M_{1,4}$ (which is one-dimensional, spanned by Getzler's relation). \end{thm}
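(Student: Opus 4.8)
The plan is to run the same argument as for Theorem~\ref{thmb}. I first feed $\pi\colon \M_{1,n}\to\M_{1,1}$ into the Leray spectral sequence. Because $\M_{1,1}$ has the rational cohomology of an affine curve, only the columns $p\in\{0,1\}$ survive and the sequence degenerates at $E_2$, so on associated graded
\[ \gr^W_4 H^3\M_{1,n} \cong \gr^W_4 H^0(\M_{1,1}, R^3\pi_*\Q)\ \oplus\ \gr^W_4 H^1(\M_{1,1}, R^2\pi_*\Q). \]
Taking $\gr^W$ of the local systems $R^q\pi_*\Q$ and splitting the resulting pure variations into isotypic pieces $\Sym^{2k}\V(-j)$, I read off weights from the Hodge-theoretic Eichler--Shimura isomorphism: $H^1(\M_{1,1},\Sym^{2k}\V(-j))$ carries a cuspidal part, of odd weight $2k+1+2j$, and (for $k\geq 1$) a one-dimensional Eisenstein part of weight $2k+2+2j$, whereas $H^1(\M_{1,1},\Q(-j))=0$ and $H^0$ detects only the trivial local system.

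This forces the shape of both summands. The weight-$4$ part of $H^1(\M_{1,1},R^2\pi_*\Q)$ can only be the Eisenstein line of $\Sym^2\V$ (the case $k=1$, $j=0$), appearing with multiplicity equal to that of $\Sym^2\V$ in $\gr^W_2 H^2$ of the fibre; the weight-$4$ part of $H^0(\M_{1,1},R^3\pi_*\Q)$ is the monodromy-invariant Tate part of $\gr^W_4 H^3$ of the fibre. I would compute both on $F(E,n)/E\cong F(E\setminus\{0\},n-1)$ in the Cohen--Taylor model, whose degree-one generators are the weight-$1$ classes $x_i\in\V$ and the weight-$2$ diagonal classes $\omega_{ij}$. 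The only degree-$3$, weight-$4$ monomials are the $\omega_{ij}x_kx_l$; under $\V\otimes\V=\Sym^2\V\oplus\Q(-1)$ the symmetric part contributes to $\gr^W_2 H^2$, and hence to the Eisenstein summand, while the symplectic-pairing part contributes to the Tate summand.

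The essential tool, as in Theorem~\ref{thmb} and isolated in Proposition~\ref{p2}, is that these are $\sym_{n-1}$-representations obtained by restricting $\sym_n$-representations on $F(E,n)/E$; this branching constraint delivers the vanishing that identifies the surviving cohomology exactly. The Tate summand is the delicate point: since $H^2(E\setminus\{0\})=0$, one has $\Delta_{ij}=-\langle x_i,x_j\rangle$ in the model, so every symplectic pairing is a coboundary $\langle x_k,x_l\rangle=-d\omega_{kl}$, and I must work out the relations among the $\omega_{ij}\langle x_k,x_l\rangle$ to see what genuinely survives in $\gr^W_4 H^3$. I expect this to be the main obstacle.

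It then remains to pass back to $\M_{1,n}$. Each generating monomial $\omega_{ij}x_kx_l$ uses at most the four indices $\{i,j,k,l\}$ and is the pullback of the like-named class along the forgetful map $\pi_S\colon\M_{1,n}\to\M_{1,S}$ for any four-element $S\supseteq\{i,j,k,l\}$, a map that commutes with $\pi$ and hence respects the entire Leray--Eichler--Shimura picture. Therefore $\gr^W_4 H^3\M_{1,n}$ is spanned by the images of the maps $\gr^W_4 H^3\M_{1,4}\to\gr^W_4 H^3\M_{1,n}$; equivalently it is a quotient of $\Ind_{\sym_4\times\sym_{n-4}}^{\sym_n}\gr^W_4 H^3\M_{1,4}$. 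Running the computation in the base case $F(E\setminus\{0\},3)$ should show that the Eisenstein and Tate contributions together collapse $\gr^W_4 H^3\M_{1,4}$ to a single line, which one identifies with Getzler's relation.
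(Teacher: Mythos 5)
There are two genuine problems. First, your Eichler--Shimura weights are wrong for the Eisenstein part: the Eisenstein class in $H^1(\M_{1,1},\Sym^{2k}\V(-j))$ has Hodge type $(2k+1+j,\,2k+1+j)$, hence weight $4k+2+2j$, not $2k+2+2j$ as you state (your cuspidal weight $2k+1+2j$ is correct). Consequently there is \emph{no} weight-$4$ Eisenstein line attached to $\Sym^2\V$: its Eisenstein class has weight $6$, reflecting the fact that there are no Eisenstein series for $\SL_2(\Z)$ of weight less than $4$ (this is exactly Lemma \ref{weightlem}, which forces even weight $\geq k+4$). Since also $H^1(\M_{1,1},\Q(-j))=0$, the whole summand $\gr^W_4 H^1(\M_{1,1},\R^2\pi_\ast\Q)$ vanishes, which is precisely the content of Proposition \ref{red} with $q=3$, $i=4$: everything in $\gr^W_4 H^3$ comes from the $\SL_2(\Z)$-invariants of the fibre. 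Your plan to compute a nonzero Eisenstein summand ``with multiplicity equal to that of $\Sym^2\V$ in $\gr^W_2H^2$ of the fibre'' would inflate the answer: already for $n=4$ the multiplicity of $\V_2$ in $H^2 F(U,3)$ is $3$, so you would predict $\dim \gr^W_4 H^3\M_{1,4} = 4$ rather than the correct $1$.

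Second, your passage back to $\M_{1,4}$ has a gap that the paper isolates as Proposition \ref{p3}. A weight-$4$ monomial $\omega_{ij}x_kx_l$ in the Cohen--Taylor model for the fibre $F(U,n-1)$ involves up to four \emph{configuration} indices \emph{in addition to} the origin; the only forgetful maps visible in this model are those preserving the origin, so the Proposition \ref{hejhej}-style argument you invoke yields pullbacks from $F(U,4)$, i.e.\ from $\M_{1,5}$, not $\M_{1,4}$. Your claim that each such monomial is pulled back along $\pi_S\colon \M_{1,n}\to\M_{1,S}$ for a \emph{four}-element $S\supseteq\{i,j,k,l\}$ is unjustified: if $S$ omits the origin, $\pi_S$ does not commute with the chosen fibration structure (one must re-translate to move a new point to the origin), and e.g.\ $\omega_{12}x_3x_4$ on $F(U,4)$ is not manifestly a pullback from any three-point configuration space. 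Closing this gap requires the extra input of Proposition \ref{p3}: $\gr^W_4 H^3 F(U,4)^{\SL_2(\Z)} \cong H^3(\M_{1,5})$ is $5$-dimensional with $\sym_5$-decomposition $V_5\oplus V_{41}$, the five forgetful maps from the identification $F(U,4)\cong F(E,5)/E$ give at least four independent pullbacks of Getzler's relation, and one must check that the $\sym_5$-symmetrization of Getzler's relation on $\MM_{1,5}$ is nonzero. Your closing sentence only verifies that the base case $\gr^W_4H^3\M_{1,4}$ is one-dimensional, which does not give spanning for general $n$; likewise, your weight-$4$, degree-$3$ computation in the model (the $\omega_{ij}\langle x_k,x_l\rangle$ relations you defer) is the substance of Proposition \ref{p2}, so the main work remains to be done there as well.
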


Together, Theorems \ref{thma}, \ref{thmb} and \ref{thmc} imply all of Getzler's claims in genus one.

\section{Cohomology of local systems}
The goal of this section is to reduce the computation to understanding the cohomology of the configuration space of points on a single elliptic curve, i.e.\ a single fiber of $\M_{1,n+1}\to\M_{1,1}$. Our calculations, although independent of his, overlap somewhat with those of Gorinov \cite{gorinov1}.

\renewcommand{\R}{\mathrm{R}}

Consider a map $f \colon X \to Y$. There is a Leray spectral sequence
\[ E_2^{pq} = H^p(Y,\R^q f_\ast \Q) \implies H^{p+q}(X),\]
which in particular defines edge maps $H^q(X) \to H^0(Y,\R^qf_\ast \Q)$. If $Y$ is connected, $y \in Y$ is a point, and we assume that $\R^qf_\ast \Q$ is a local system, then $H^0(Y,\R^qf_\ast \Q ) = H^q(X_y)^{\pi_1(Y,y)}$.  If this sequence degenerates at $E_2$, then the edge maps are surjective by general considerations; when $f$ is smooth and proper this is the invariant subspace theorem. 

Let $2g-2+n>0$ and consider the forgetful map $\pi \colon \M_{g,n+k} \to \M_{g,n}$. The fiber over a geometric point of $\M_{g,n}$, corresponding to the Riemann surface $\Sigma$ with marked points $[n] \hookrightarrow \Sigma$, is the configuration space $F(U, k)$ of $k$ points in the complement $U = \Sigma \setminus [n]$. The map $\pi$ is topologically a locally trivial fibration: it suffices to prove this for $k=1$, in which case it follows by embedding $\M_{g,n+1}$ in the universal curve over $\M_{g,n}$, which is a locally trivial fibration by Ehresmann's theorem. In any case, this implies that the sheaves $\R^q \pi_\ast \Q$ are local systems whose stalks over the point $[n] \hookrightarrow \Sigma$ are $H^q(F(U,k),\Q)$. One can also define these local systems as follows: the diagonal action of $\mathrm{Diff}^+(\Sigma,[n])$ on $\Sigma^k$ preserves the subspace $F(U,k)$, thus inducing an action of $\mathrm{Diff}^+(\Sigma,[n])$ on $H^\bullet F(U,k)$, which factors through the mapping class group $\mathrm{MCG}(\Sigma,[n]) = \pi_1(\M_{g,n})$ since isotopic diffeomorphisms induce the same map on cohomology. However, the former definition makes it clear that $\R^q\pi_\ast\Q$ carries an admissible variation of mixed Hodge structure. 

We shall only concern ourselves with the fibration $\pi \colon \M_{1,n+1} \to \M_{1,1}$. Since $\M_{1,1}$ is a non-compact curve, the Leray spectral sequence for $\pi$ has only two nonzero columns and must immediately degenerate. It therefore reduces to the exact sequence
\begin{equation*} \label{exactseq}
0 \to H^1(\M_{1,1},\R^{q-1}\pi_\ast\Q) \to H^q\M_{1,n+1} \to H^q (F(U,n))^{\SL_2(\Z)} \to 0, \tag{$\ast$}
\end{equation*}
where $U$ is the complement of the origin in an elliptic curve. This is an exact sequence of mixed Hodge structures, as was first proven by Zucker \cite[9.16(ii)]{zucker}. 

Let $\V_k$ denote both the $\SL_2(\Z)$-representation $\Sym^k H^1(U)$, and the corresponding variation of Hodge structure on $\M_{1,1}$. The cohomology group $H^1(\M_{1,1},\V_k)$ is determined by the Eichler--Shimura isomorphism, or more specifically its Hodge-theoretic interpretation.  With no level structure present, one gets the following result. If $k$ is odd, then $\V_k$ has no cohomology. (This is a consequence of the fact that the elliptic involution acts as $(-1)^k$ on every fiber of $\V_k$.) If $k$ is even, then there is a decomposition 
\[ H^1(\M_{1,1},\V_k)\otimes \C = H^{k+1,0}\oplus H^{0,k+1}\oplus H^{k+1,k+1}, \]
where the first two summands are cohomology classes associated to holomorphic and antiholomorphic cusp forms of weight $k+2$ for the full modular group, and the final summand is associated to Eisenstein series of weight $k+2$. These calculations are classical and can be found in \cite{shimurabook,verdiershimura}, although when these references were written, the mixed Hodge theory needed to state the result this way did not yet exist. A detailed proof in terms of mixed Hodge theory can be found in \cite{gorinov1}.

All we shall require from the Eichler--Shimura theory is the following obvious corollary:

\begin{lem}\label{weightlem}If $\gr^W_i  H^1(\M_{1,1},\V_k) \neq 0$, then either $i$ is odd or $i \geq k+4$. \end{lem}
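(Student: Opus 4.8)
The plan is to read off the statement directly from the Eichler--Shimura decomposition recalled just above the lemma, so the proof should be almost immediate once the weights of the three summands are identified. First I would dispose of the odd case: when $k$ is odd, $\V_k$ has no cohomology at all, so the hypothesis $\gr^W_i H^1(\M_{1,1},\V_k)\neq 0$ forces $k$ to be even, and in particular the conclusion holds vacuously. Thus I may assume $k$ is even from the outset and work with the explicit decomposition $H^1(\M_{1,1},\V_k)\otimes\C = H^{k+1,0}\oplus H^{0,k+1}\oplus H^{k+1,k+1}$.

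The core of the argument is simply computing the weight of each of the three Hodge summands. The cusp form summands $H^{k+1,0}$ and $H^{0,k+1}$ together constitute the pure Hodge structure of weight $k+1$ attached to cusp forms of weight $k+2$ (consistent with the classical normalization that a weight $k+2$ modular form contributes in degree $k+1$). Since $k$ is even, $k+1$ is odd, so these classes live in odd weight, landing squarely in the first alternative of the conclusion. The Eisenstein summand $H^{k+1,k+1}$ is of Hodge type $(k+1,k+1)$ and hence has weight $2(k+1) = 2k+2$. I would then observe that $2k+2 \geq k+4$ precisely when $k \geq 2$; since $k$ is even and nonzero (for $k=0$ the local system is trivial, $H^1(\M_{1,1},\Q)=0$, so there is nothing to check), we have $k\geq 2$ and hence the Eisenstein part has weight $\geq k+4$, matching the second alternative. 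Assembling these cases, every nonzero graded piece $\gr^W_i$ has either $i$ odd (cusp forms) or $i \geq k+4$ (Eisenstein), which is exactly the claim.

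There is essentially no hard step here: the lemma is an elementary bookkeeping consequence of the decomposition stated above, and its only real content is that there is a gap between the odd cusp-form weight $k+1$ and the nearest even weight that the cohomology can occupy, namely the Eisenstein weight $2k+2$. The one point worth stating carefully is the weight normalization — that cusp forms of weight $k+2$ contribute a Hodge structure of weight $k+1$ rather than $k+2$ — since this is the convention that makes the parity argument work; I would cite the Eichler--Shimura references already given (\cite{shimurabook,verdiershimura}, with the mixed Hodge formulation in \cite{gorinov1}) for this normalization rather than rederive it.
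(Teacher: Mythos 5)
Your proof is correct and follows essentially the same route as the paper's: cusp-form classes have odd weight $k+1$, and the Eisenstein summand $H^{k+1,k+1}$ has weight $2k+2 \geq k+4$ once $k \geq 2$. The only cosmetic difference is how you exclude small $k$ — you note $H^1(\M_{1,1},\Q)=0$ for $k=0$, while the paper invokes the nonexistence of Eisenstein series of weight less than $4$ for $\SL_2(\Z)$; both observations are valid and equivalent in effect.
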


\begin{proof}Let $\alpha \in H^1(\M_{1,1},\V_k) \otimes \C$. If $\alpha$ is a class associated to cusp forms, then its weight is odd, and if $\alpha$ is a class associated to Eisenstein series, its weight is $2k+2$, hence at least $k+4$, since there are no Eisenstein series for $\SL_2(\Z)$ of weight less than $4$.\end{proof}

Consider $H^qF(U,n)$ as an $\SL_2(\Z)$-representation and mixed Hodge structure. There exists a direct sum decomposition
\[ \tag{$\ast\ast$}\gr^W_iH^qF(U,n) = \bigoplus_j \V_{k_j}(-n_j) \label{dec}\]
with $k_j + 2n_j = i$ for all $j$. The existence of such a decomposition can be seen very explicitly from the Cohen--Taylor spectral sequence, cf.\ the next section. Since $F(U,n)$ is smooth, $\gr^W_iH^qF(U,n)$ is nonzero only if $i\geq q$. 

 \begin{rem}Gorinov's preprint \cite{gorinov1} proves that $\R^q \pi_\ast \Q$ is in fact equal to its associated graded for the weight filtration, i.e.\ it is a direct sum of variations of (pure) Hodge structures. We shall not need this. \end{rem}

\begin{prop}\label{red}Suppose that $i$ is even and that $i \leq q+2$. Then the exact sequence \eqref{exactseq} induces an isomorphism
\[ \gr^W_i H^q \M_{1,n+1} \stackrel \sim \to \gr^W_i H^q (F(U,n))^{\SL_2(\Z)}.\]
 \end{prop}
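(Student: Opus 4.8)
The plan is to work directly with the exact sequence \eqref{exactseq}, which we recall reads
\[ 0 \to H^1(\M_{1,1},\R^{q-1}\pi_\ast\Q) \to H^q\M_{1,n+1} \to H^q (F(U,n))^{\SL_2(\Z)} \to 0. \]
Since everything in sight is a morphism of mixed Hodge structures and taking $\gr^W_i$ is exact, applying $\gr^W_i$ yields a short exact sequence
\[ 0 \to \gr^W_i H^1(\M_{1,1},\R^{q-1}\pi_\ast\Q) \to \gr^W_i H^q\M_{1,n+1} \to \gr^W_i H^q (F(U,n))^{\SL_2(\Z)} \to 0. \]
So the surjection I need is automatic, and the content of the proposition is that the left-hand term vanishes whenever $i$ is even and $i \leq q+2$. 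That is what I would aim to prove.

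To control the left-hand term, I would first decompose the local system $\R^{q-1}\pi_\ast\Q = H^{q-1}F(U,n)$ into its weight-graded pieces and then apply the decomposition $(\ast\ast)$, writing $\gr^W_j H^{q-1}F(U,n) = \bigoplus \V_{k}(-m)$ with $k + 2m = j$. Then $H^1(\M_{1,1},-)$ of a Tate twist $\V_k(-m)$ is $H^1(\M_{1,1},\V_k)(-m)$, so by Lemma \ref{weightlem} its weight-$i$ part is nonzero only if $i - 2m$ is odd or $i - 2m \geq k+4$. The first case forces $i$ odd (excluded by hypothesis), so I am reduced to the second case: a contribution to $\gr^W_i$ can only come from a summand with $i - 2m \geq k + 4$, equivalently $i \geq k + 2m + 4 = j + 4$. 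Since such a summand lives in $\gr^W_j H^{q-1}F(U,n)$, and purity/smoothness of $F(U,n)$ forces $j \geq q-1$ by the last sentence before the Remark, I obtain $i \geq j + 4 \geq (q-1) + 4 = q + 3$.

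This contradicts the standing hypothesis $i \leq q+2$, so no summand can contribute and $\gr^W_i H^1(\M_{1,1},\R^{q-1}\pi_\ast\Q) = 0$, completing the argument. The main obstacle — really the only place where care is needed — is the bookkeeping linking the weight index $i$ on the cohomology of $\M_{1,1}$ to the weight index $j$ on the fiber cohomology through the Tate twist: one must make sure the twist by $(-m)$ is tracked consistently on both the Eichler--Shimura side (shifting weights up by $2m$) and the smoothness bound (which applies to the untwisted fiber cohomology, i.e.\ to $j$). Once the two numerical constraints $i \geq j+4$ and $j \geq q-1$ are combined, the inequality $i \geq q+3$ drops out immediately and contradicts $i \leq q+2$; the evenness of $i$ is used precisely to discard the odd-weight cusp form classes via Lemma \ref{weightlem}.
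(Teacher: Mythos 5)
Your proof is correct and follows essentially the same route as the paper: apply the exact functor $\gr^W_i$ to \eqref{exactseq}, reduce to the vanishing of $\gr^W_i H^1(\M_{1,1},\gr^W_j\R^{q-1}\pi_\ast\Q)$, and combine Lemma \ref{weightlem} with the decomposition \eqref{dec} (giving $i \geq j+4$) and the smoothness bound $j \geq q-1$ to reach the contradiction $i \geq q+3$. Your explicit bookkeeping of the Tate twist $(-m)$ is exactly what the paper's phrase ``weight at least $j+4$'' encodes.
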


\begin{proof}Apply the exact functor $\gr^W_i$ to \eqref{exactseq}. It suffices to show that 
\[ \gr^W_i H^1(\M_{1,1},\gr^W_j \R^{q-1}\pi_\ast\Q) = 0\]
for all $j$. Since $i$ is assumed even, Lemma \ref{weightlem} and the decomposition \eqref{dec} imply that any nonzero class in the latter space has weight at least $j+4$. On the other hand, we have already noted that smoothness of $F(U,n)$ implies that $\gr^W_j \R^{q-1}\pi_\ast\Q$ is nonzero only for $j \geq q-1$. \end{proof}

Proposition \ref{red} reduces Theorems \ref{thma}, \ref{thmb} and \ref{thmc} to statements about the weights of the cohomology of $F(U,n)$. 

\begin{rem}We remark that Proposition \ref{red} proves in particular that all classes in $\gr^W_i H^q \M_{1,n+1}$ are of Tate type when $i \leq q+2$ and $i$ is even, a fact which may be of independent interest. \end{rem}

\section{The configuration space of points on an elliptic curve}

We now wish to study the cohomology of $F(U,n)$, considered as a Hodge structure and $\SL_2(\Z)$-module, with $U$ the complement of the origin in an elliptic curve.

We compute the cohomology of $F(U,n)$ by means of the Cohen--Taylor spectral sequence \cite{cohentaylor}. When $X$ is an oriented manifold,  this is just the Leray spectral sequence for the inclusion $F(X,n) \subset X^n$, as shown by Totaro \cite{totaro}. We remark that a `Poincar\'e dual' version of this spectral sequence exists in much greater generality, as proven in \cite{benderskygitler} (for arbitrary simplicial sets) and in \cite{getzler99} (for the relative configuration space of a morphism of locally compact spaces, with arbitrary coefficient systems). To get this spectral sequence from Getzler's result, take the hypercohomology spectral sequence of the resolution he constructs of $j_!j^\ast \Z$.

\subsection{The Cohen--Taylor spectral sequence}

Let $X$ be an oriented manifold of dimension $r$. For $1\leq i,j \leq n$, let $\pr_{i} \colon X^n \to X$ be the projection on the $i$th factor, and $\pr_{ij} \colon X^n \to X^2$ similarly. We denote by $\Delta$ the class of the diagonal in $H^r(X^2)$. Define a differential bigraded commutative algebra $E_X(n)$ by
\[ E_X(n) = \left( H^\bullet(X^n)[\omega_{ij}]_{1\leq i, j \leq n}\right) \big/ \text{relations} \]
where the relations are: \begin{itemize}
\item $\omega_{ii} = 0$;
\item $\omega_{ij}^2 = 0$;
\item $\omega_{ij} = (-1)^r\omega_{ji}$;
\item $\omega_{ij}\omega_{ik} + \omega_{jk}\omega_{ji} + \omega_{ki}\omega_{kj} = 0$;
\item $\omega_{ij} (\pr_i^\ast \alpha - \pr_j^\ast \alpha) = 0$.
\end{itemize}
$E_X(n)$ is bigraded by giving $H^i(X^n)$ degree $(i,0)$ and $\omega_{ij}$ degree $(0,r-1)$. Its differential is defined by $\d \omega_{ij} = \pr_{ij}^\ast \Delta$, and $\d \alpha = 0$ for $\alpha \in H^\bullet(X^n)$. 

\begin{prop}[Totaro] $E_X(n)$ is isomorphic as a differential bigraded algebra to the $E_r$ page of the Leray spectral sequence for $F(X,n) \hookrightarrow X^n$. This is the first nontrivial differential of the spectral sequence. \end{prop}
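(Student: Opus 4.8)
The plan is to construct an explicit morphism of bigraded algebras $\Phi \colon E_X(n) \to E_r^{\bullet\bullet}$ from the Cohen--Taylor model into the Leray spectral sequence of $j \colon F(X,n) \hookrightarrow X^n$, and then to show that it is an isomorphism intertwining $\d$ with the Leray differential. On the bottom row $q=0$ I would let $\Phi$ be the edge map $H^\bullet(X^n) \to E^{\bullet,0}$. For the generators $\omega_{ij}$, which the bigrading places in degree $(0,r-1)$, I would use the \emph{tube} (or \emph{linking}) class of the diagonal $\Delta_{ij} \cong X^{n-1}$, a smooth closed submanifold of codimension $r$: a deleted tubular neighborhood of $\Delta_{ij}$ fibers over $\Delta_{ij}$ with fiber $\field{R}^r \setminus \{0\} \simeq S^{r-1}$, and the orientation of $X$ trivializes the relevant normal-orientation local system, producing a canonical class of bidegree $(0,r-1)$.

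The heart of the argument is a local, sheaf-theoretic computation of $\R^q j_\ast \Q$. Over the open stratum $F(X,n)$ one has $\R^0 j_\ast \Q = \Q$ and $\R^q j_\ast \Q = 0$ for $q>0$; the higher sheaves are supported on the diagonal arrangement. Near a point lying on exactly the diagonals recorded by a partition $P$ of a subset of $\{1,\dots,n\}$, the configuration space looks locally like a product of Euclidean configuration spaces $F(\field{R}^r,|B|)$, one per block $B$ of $P$, times a contractible factor; hence the stalk of $\R^q j_\ast \Q$ is a tensor product of the cohomologies $H^\bullet(F(\field{R}^r,m))$. Arnol'd's and F.\ Cohen's classical computation of the latter is precisely the algebra on generators $\omega_{ij}$ of degree $r-1$ subject to $\omega_{ij}^2=0$, $\omega_{ij}=(-1)^r\omega_{ji}$, and the three-term relation, which simultaneously supplies the generators and \emph{all} the relations of $E_X(n)$. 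Assembling these models, $\R^q j_\ast \Q$ is a direct sum of constant sheaves on the diagonal strata, each stratum being a power of $X$, so that $H^p(X^n,\R^q j_\ast \Q)$ is computed by Künneth and matches $E_X(n)^{p,q}$ bidegree by bidegree.

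For the differential I would first observe that each local generator has degree $r-1$, so $\R^q j_\ast \Q$ vanishes unless $q$ is a multiple of $r-1$; the rows strictly between $q=0$ and $q=r-1$ are therefore empty, which forces $E_2 = \cdots = E_r$ and makes $d_r$, of bidegree $(0,r-1)\to(r,0)$, the first possibly nonzero differential. On the single-diagonal local model $\field{R}^r\setminus\{0\}$ one checks directly that the Leray connecting map sends the tube class to the Gysin image of the fundamental class of the diagonal; globally this is $\pr_{ij}^\ast \Delta$, which is exactly $\d\omega_{ij}$ in $E_X(n)$. Multiplicativity of $\Phi$ then follows because, in the local product models, products of tube classes correspond to exterior products of the generators.

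I expect the main obstacle to be the global assembly: upgrading the clean local description of $\R^q j_\ast \Q$ to a genuine splitting into constant sheaves on the strata, and proving that $\Phi$ is an isomorphism of \emph{algebras} with no spurious classes or relations. This amounts to matching the multiplicative structure of the Leray spectral sequence against that of the Cohen--Taylor model, for which the three-term (Arnol'd) relation is the one genuinely nontrivial combinatorial--geometric input; by contrast the relations $\omega_{ij}^2=0$, $\omega_{ij}=(-1)^r\omega_{ji}$, and $\omega_{ij}(\pr_i^\ast\alpha-\pr_j^\ast\alpha)=0$ are comparatively formal consequences of the geometry of a single diagonal together with orientability of $X$.
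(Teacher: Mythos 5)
This proposition is one the paper deliberately does not prove: it is quoted from Totaro's paper \cite{totaro} (``as shown by Totaro''), so there is no in-paper argument to compare against. Your sketch is, in substance, a faithful reconstruction of Totaro's original proof: the local product structure of $X^n$ along a diagonal stratum of partition type $P$, reducing stalks of $\R^q j_\ast \Q$ to tensor products of Arnol'd--Cohen algebras $H^\bullet(F(\field{R}^r,m))$; the trivialization of the normal monodromy by the orientation of $X$ (the normal bundle of a diagonal being a sum of copies of $TX$); the observation that $\R^q j_\ast\Q$ is concentrated in rows $q \equiv 0 \pmod{r-1}$, so that all differentials $d_s$ with $2 \leq s \leq r-1$ vanish for bidegree reasons and $E_2 = E_r$; and the identification $d_r\omega_{ij} = \pr_{ij}^\ast\Delta$ via the Gysin sequence of a single diagonal (which is exactly the $n=2$ Example in the paper). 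All of these steps are correct as stated.

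The one place you are right to flag difficulty --- upgrading the stalkwise description to a global splitting of $\R^q j_\ast\Q$ into constant sheaves on the closed diagonal strata --- is also where the standard proof does its real work, and there is a cleaner route than proving the splitting first and computing afterwards: one never needs to split the sheaf a priori. Instead, use your tube classes and pullbacks to define a sheaf-level map from the explicit direct sum of constant sheaves on the strata (indexed by a monomial basis of $E_X(n)^{\bullet,q}$ modulo the Arnol'd relations) \emph{into} $\R^q j_\ast\Q$, and verify that it is an isomorphism on each stalk, which is precisely Cohen's computation again in the local model. This sidesteps the extension problem entirely, since an isomorphism of sheaves is detected stalkwise, and it simultaneously proves that your map $\Phi$ introduces no spurious classes or relations: the relations $\omega_{ij}^2 = 0$, the three-term relation, and $\omega_{ij}(\pr_i^\ast\alpha - \pr_j^\ast\alpha) = 0$ all hold in $E_2$ because sections of these sheaves are determined by their stalks, and the last relation holds because $\omega_{ij}$ is supported on $\Delta_{ij}$, where $\pr_i^\ast\alpha$ and $\pr_j^\ast\alpha$ agree --- confirming your judgement that this relation is formal. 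With that adjustment your outline closes its own acknowledged gap and is a complete proof in the same spirit as the cited source.
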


By the functoriality of the Leray spectral sequence we see that if $G$ acts on the space $X$, then the spectral sequence is $G \wr \sym_n$-equivariant. When we consider an elliptic curve $X$, this implies that the spectral sequence is compatible with the $\SL_2(\Z)$-action on cohomology.

\begin{ex}When $n=2$ this spectral sequence is equivalent to the long exact sequence 
\[ \cdots \to H^{\bullet+r} (X) \stackrel{\Delta_!}{\longrightarrow} H^\bullet (X^2) \longrightarrow H^\bullet (F(X,2))  \stackrel{\delta} \longrightarrow H^{\bullet+r+1} (X) \to \cdots, \]
where $\Delta_!$ is the Gysin map.  \end{ex}

It is known that the differentials in the Leray spectral sequence for a morphism of algebraic varieties are compatible with the mixed Hodge structures, either by Saito's theory (using the non-perverse $t$-structure on the derived category of mixed Hodge modules \cite[Remark 4.6.(2)]{saitomixedhodge}) or by \cite{arapura}. When the Cohen--Taylor sequence is written in the form we are using here, one needs to include a shift for the weight filtration (cf.\ the preceding example). Let $X$ be an algebraic variety of complex dimension $r$. If we tensor $E^{p,q}_X(n)$, where $q= k(2r-1)$, with $\Q(-rk)$, then the differentials are strictly compatible with the natural mixed Hodge structures involved. From this one deduces:

\begin{prop}[Totaro] If $X$ is a smooth algebraic variety with $H^i(X)$ pure of weight $i$ for all $i$, then every differential except $d_r$ vanishes, and $\gr^W_{i+k}H^i F(X,n)$ is exactly the image of $E^{i-k(2r-1),k(2r-1)}_X(n)$ in $H^\bullet E_X(n)$.  \end{prop}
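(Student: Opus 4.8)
The plan is to push the weight filtration through the Cohen--Taylor spectral sequence and show that purity forces every differential after $d_r = \d$ to vanish. First I would record the weights on the $E_r$-page. By the Künneth formula and the hypothesis that $H^i(X)$ is pure of weight $i$, the algebra $H^\bullet(X^n)$ is pure, with $H^p(X^n)$ of weight $p$; this occupies the row $k=0$ of $E_X(n)$. After the prescribed Tate twist by $\Q(-rk)$ on a $k$-fold $\omega$-product, each generator $\omega_{ij}$ is assigned weight $2r$ while keeping cohomological degree $2r-1$, and one checks that all five defining relations of $E_X(n)$ are homogeneous for the resulting bigrading. Hence the term $E^{p,q}_X(n)$ with $q = k(2r-1)$ is pure of weight $p + 2rk$; writing $i = p+q$ for the total degree, this weight is exactly $i+k$. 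So on the $E_r$-page the $k$-fold $\omega$-products account precisely for the weight-$(i+k)$ part in degree $i$.

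Next I would note that the Cohen--Taylor differential $d_r = \d$ respects these weights: since $\d\omega_{ij} = \pr_{ij}^\ast\Delta$ lowers the $\omega$-count by one and raises the $H^\bullet(X^n)$-degree by $2r$, it sends weight $p+2rk$ to $(p+2r)+2r(k-1) = p+2rk$, as it must, the Leray differentials being morphisms of mixed Hodge structure. The heart of the argument --- and the step I expect to be the main obstacle --- is to exclude all later differentials. Here the key point is that the nonzero rows of the $E_r$-page are exactly those with $q$ a multiple of $2r-1$, a feature that persists on every later page (whose entries are subquotients), so the $\omega$-count $k = q/(2r-1)$ is rigidly recorded by the Leray degree. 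A differential $d_s$ connecting the rows $k$ and $k'$ therefore satisfies $s = (k-k')(2r-1)+1$, and a short computation with the weight formula shows that the weight changes by $1-(k-k')$ across it. This vanishes exactly when $k-k'=1$, which is realized by $d_r$; for $k-k'\geq 2$ the differential would strictly lower the weight. Since the $E_r$-page terms, and all their subquotients, are pure, and a morphism of mixed Hodge structures between pure structures of distinct weights is zero, every differential other than $d_r$ vanishes.

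It then follows that the spectral sequence degenerates immediately after $d_r$, so that $E_\infty = H^\bullet(E_X(n),\d)$ and the Leray filtration on $H^i F(X,n)$ has associated graded equal to the $\d$-cohomology, concentrated in the terms $E^{i-k(2r-1),\,k(2r-1)}_X(n)$. Finally I would match the two filtrations. The Leray filtration is a filtration of $H^i F(X,n)$ by sub-mixed Hodge structures whose graded piece coming from the row $k$ is pure of the single weight $i+k$; as distinct rows yield distinct weights in a fixed degree, the Leray filtration splits the weight filtration and the two induce the same grading. This identifies $\gr^W_{i+k} H^i F(X,n)$ with the image of $E^{i-k(2r-1),\,k(2r-1)}_X(n)$ in $H^\bullet(E_X(n),\d)$, as claimed.
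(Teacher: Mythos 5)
Your proof is correct and takes essentially the approach the paper intends: the paper states this proposition without a written proof, deducing it from the immediately preceding observation that after the twist by $\Q(-rk)$ the terms $E^{p,k(2r-1)}_X(n)$ are pure of weight $p+2rk = i+k$ and that the Leray differentials are morphisms of mixed Hodge structures, so every differential other than the first changes the weight by $1-(k-k')\neq 0$ and must vanish --- exactly your computation, down to the final identification of $\gr^W_{i+k}H^iF(X,n)$ with the row-$k$ term via exactness of $\gr^W$. Your one harmless looseness is inherited from the paper's notation: in the complex setting the surviving differential is $d_{2r}$ (page index the real dimension), and ``$d_r$'' in the statement simply names the first nontrivial differential from the earlier manifold-version proposition.
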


In particular, when $X$ is a smooth algebraic curve with at most one puncture, we find that there is a single $E_2$-differential, and that $\gr^W_i H^i F(X,n)$ is the image of classes in the bottom row, and $\gr^W_{i+1}H^iF(X,n)$ is the image of classes in the second row.

\begin{ex}\label{ex}Figures 1 and 2 display the $E_2$ and $E_\infty$ pages computing the cohomology of $F(U,3)$, where $U$ is the complement of the origin in an elliptic curve. As before we denote $\V_k = \Sym^k H^1(U)$. These are not too hard to compute by hand. \end{ex}

\begin{figure}[h] \begin{tikzpicture}
  \matrix (m) [matrix of math nodes,
    nodes in empty cells,nodes={minimum width=5ex,
    minimum height=5ex,outer sep=-5pt},
    column sep=1ex,row sep=1ex]{
                &      &     &     & & \\
           2    &     2\Q(-2) & 2\V(-2) & \cdot & \cdot & \\
          1     &  3\Q(-1) &  6 \V(-1)  & 3\V_2(-1)+3\Q(-2) & \cdot & \\
          0     &  \Q  & 3\V &  3\V_2 + 3\Q(-1)  & \V_3 + 2 \V(-1) & \\
    \quad\strut &   0  &  1  &  2  & 3 & \strut \\};
    \draw[thick] (m-5-1.north) -- (m-5-6.north) ;
\draw[thick] (m-1-1.east) -- (m-5-1.east) ;
\end{tikzpicture}
 \caption{The $E_2$ page of the Cohen--Taylor spectral sequence for $F(U,3)$.}
\end{figure}

\begin{figure}[h] \begin{tikzpicture}
  \matrix (m) [matrix of math nodes,
    nodes in empty cells,nodes={minimum width=5ex,
    minimum height=5ex,outer sep=-5pt},
    column sep=1ex,row sep=1ex]{
                &      &     &     & & \\
           2    &     \cdot & 2\V(-2) & \cdot & \cdot & \\
          1     &   \cdot &  4\V(-1)  &  {3\V_2(-1)}+ {\Q(-2)} & \cdot & \\
          0     &  {\Q}  & 3\V &  3\V_2   & \V_3  & \\
    \quad\strut &   0  &  1  &  2  & 3 & \strut \\};
    \draw[thick] (m-5-1.north) -- (m-5-6.north) ;
\draw[thick] (m-1-1.east) -- (m-5-1.east) ;
\end{tikzpicture}
\caption{The $E_3 = E_\infty$ page of the same spectral sequence. The entries ``$\Q$'' and ``$\Q(-2)$'' on the first and second row correspond to a generator and a relation, respectively. The relation is of course Getzler's relation on $\MM_{1,4}$. }
\end{figure}

The natural forgetful maps $F(X,n+k) \to F(X,n)$ induce maps $E_X(n) \to E_X(n+k)$. These are defined by the inclusion of $H^\bullet(X)^{\otimes n}$ in $H^\bullet(X)^{\otimes (n+k)}$ and by sending $\omega_{ij}$ to $\omega_{ij}$. Clearly, $E_X(n)$ is a dg subalgebra of $E_X(n+k)$, inducing an injection in cohomology.

\begin{prop}\label{hejhej}Any class in $E^{p,k(r-1)}_X(n)$, where $n\geq p+2k$, is a sum of classes pulled back from $E^{p,q}(p+2k)$ under forgetful maps.\end{prop}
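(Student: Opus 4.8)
The plan is to exploit the bigrading of $E_X(n)$ to see that a class of bidegree $(p,k(r-1))$ can only be built out of monomials involving boundedly many of the $n$ points, and then to recognize any such monomial as a pullback. First I would observe that, since each generator $\omega_{ij}$ has second degree $r-1>0$ while the classes coming from $H^\bullet(X^n)$ have second degree $0$, any element of $E^{p,k(r-1)}_X(n)$ is a $\Q$-linear combination of monomials
\[ m = \alpha\cdot \omega_{i_1 j_1}\omega_{i_2 j_2}\cdots\omega_{i_k j_k}, \]
with exactly $k$ of the generators $\omega$ present and with $\alpha = \beta_1\otimes\cdots\otimes\beta_n$ a decomposable K\"unneth class in $H^\bullet(X)^{\otimes n}$ of total degree $p$. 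It suffices to pull back each such monomial separately.

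Next I would bound the support of $m$, meaning the set $S\subseteq\{1,\dots,n\}$ of indices appearing as an endpoint of some $\omega_{i_l j_l}$ or as a slot carrying a non-unit factor $\beta_i\neq 1$. Since $X$ is connected, $H^0(X)=\Q$, so every non-unit $\beta_i$ has positive degree; as $\sum_i\deg\beta_i = p$, at most $p$ of the slots are occupied. The $k$ omega-factors account for at most $2k$ further indices. Hence $|S|\le p+2k$.

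Finally I would invoke the functoriality of the forgetful maps recorded above. The monomial $m$ lies visibly in the dg subalgebra of $E_X(n)$ generated by the cohomology classes supported on $S$ together with the $\omega_{ij}$ for $i,j\in S$; as the defining relations involve only the indices occurring in them, this subalgebra is exactly the image of $E_X(S)$ under the inclusion induced by the map forgetting all points outside $S$. Because $n\ge p+2k$, I can enlarge $S$ to some $S'$ of size precisely $p+2k$, whereupon $m$ is pulled back from $E^{p,k(r-1)}_X(p+2k)\cong E_X(S')$. Summing over the constituent monomials, each possibly requiring a different subset $S'$, yields the statement.

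I expect no genuine obstacle in this argument; its entire content is the support count, and the only point deserving care is that \emph{no reduction modulo the relations is ever needed}. Each monomial, as written, already has support of size at most $p+2k$, so one never has to rewrite a class into a form with smaller support and hence never needs a basis of $E_X(n)$ or any control of cancellations. The relation $\omega_{ij}(\pr_i^\ast\beta-\pr_j^\ast\beta)=0$ is consistent with this picture: it merely transports a cohomology factor between the two endpoints of an already-present $\omega_{ij}$, so it preserves the support and in particular cannot force $|S|$ above $p+2k$.
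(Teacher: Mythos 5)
Your argument is correct and is essentially the paper's own proof, spelled out in more detail: the paper likewise decomposes a class into monomials $\beta\cdot\omega_{i_1j_1}\cdots\omega_{i_kj_k}$ with $\beta$ K\"unneth-homogeneous, notes that $\beta$ is pulled back from $H^p(X^p)$ (at most $p$ non-unit tensor factors) while the $\omega$'s involve at most $2k$ indices, and concludes by functoriality of the forgetful maps. Your closing remark that no reduction modulo the defining relations is ever needed is a valid and worthwhile clarification, but it does not change the substance of the argument.
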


\begin{proof}If $\alpha \in E^{p,k(r-1)}$, then $\alpha$ is a sum of monomials of the form $\beta\cdot \omega_{i_1j_1}\cdots \omega_{i_kj_k}$, with $\beta \in H^p(X^n)$ homogeneous with respect to the K\"unneth decomposition. Then $\beta$ is pulled back from $H^p(X^p)$, and there are at most $2k$ distinct indices $i_l, j_l$. \end{proof}

\begin{rem}The preceding proposition is the ``surjectivity'' part of Church's proof of representation stability of $H^\bullet F(X,n)$, see \cite[Definition 2.1 II]{church}. Our proof of Theorem \ref{p2} below also uses a simple special case of his description of $E_2^{p,q}$ as an induced representation. \end{rem}

\subsection{The case of an elliptic curve}

We now specialize to the case where $U$ is the complement of the origin in an elliptic curve. 
We first want to prove the following result, which by our previous discussion implies the `generators' part of Getzler's claims. 

\begin{prop}\label{p1}For every $n$, $H^{p,0}E_U(n)^{\SL_2(\Z)} \neq 0 \iff p=0$. \end{prop}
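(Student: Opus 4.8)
The goal is to show that the $\SL_2(\Z)$-invariants of the bottom row $E_U^{\bullet,0}(n) = H^\bullet(U^n)$ of the Cohen--Taylor complex, after passing to $E_2$-cohomology, are nonzero only in degree $0$. Since $E_U^{p,0}(n) = H^p(U^n)$ carries no $\omega_{ij}$'s, the relevant differential is the incoming one $d\colon E_U^{p-1,r-1}(n)\to E_U^{p,0}(n)$ sending $\omega_{ij}\mapsto \pr_{ij}^\ast\Delta$ (here $r=2$), and I need the invariant part of its cokernel to vanish for $p>0$. So the statement is really about the image of this single differential inside $H^p(U^n)^{\SL_2(\Z)}$.

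**Key steps.** First I would compute $H^\bullet(U)$ as an $\SL_2(\Z)$-module: since $U$ is an elliptic curve with one puncture, $H^0(U)=\Q$, $H^1(U)=\V_1=\V$ is the standard two-dimensional representation, and $H^2(U)=0$. Thus by K\"unneth $H^\bullet(U^n)$ is built entirely out of tensor powers of $\V$, and its $\SL_2(\Z)$-invariants are governed by classical $\SL_2$-representation theory: since $\V$ is symplectically self-dual, invariants in $\V^{\otimes m}$ arise only in even degree $m$ and are spanned by products of the symplectic form $\omega\in (\V\otimes\V)^{\SL_2(\Z)}$, i.e.\ by perfect matchings. Second, I would identify the diagonal class: $\pr_{ij}^\ast\Delta\in H^2(U^2)$ decomposes under K\"unneth, and its $\V\otimes\V$-component in factors $i,j$ is precisely (a multiple of) the symplectic form $\omega$, the unique invariant generator. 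The crucial observation is then that \emph{every} invariant generator of $H^{2m}(U^n)^{\SL_2(\Z)}$ in degree $2m>0$ can be written as a product $\omega_{i_1j_1}\cdots$ of symplectic forms paired across distinct factors, and each such symplectic-form factor is exactly the image under $d$ of the corresponding $\omega_{ij}$. Hence $d$ already surjects onto $H^{2m}(U^n)^{\SL_2(\Z)}$ for every $m>0$, so the invariant cohomology of the bottom row is concentrated in degree $0$, where it is the image of $H^0(U^n)=\Q$.

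**Making it precise and the main obstacle.** To turn this into a clean argument I would use Proposition \ref{hejhej}: any invariant class is pulled back from a bounded number of factors, reducing everything to small $n$ where the matching combinatorics is explicit. The genuine subtlety is not the surjectivity of $d$ onto the top of the invariants, but rather that I must work on the $E_2$-page, i.e.\ modulo the \emph{image} of $d$ coming from the second row, and simultaneously check I am not accidentally killing the degree-zero class. The degree-zero invariant $1\in H^0(U^n)$ survives because there is no incoming differential into $E_U^{0,0}$, giving the ``if'' direction; the ``only if'' direction is the statement just sketched, that all positive-degree invariants are hit. The main obstacle will therefore be bookkeeping the $\SL_2(\Z)$-invariant theory of $\V^{\otimes m}$ together with the relation $\omega_{ij}(\pr_i^\ast\alpha-\pr_j^\ast\alpha)=0$, to verify that every invariant monomial in $H^{2m}(U^n)$ is genuinely in the image of $d$ and not merely congruent to a lower-weight correction; this is where I expect to lean on the explicit matching description of $(\V^{\otimes 2m})^{\SL_2(\Z)}$ and on the compatibility of $d$ with the $\SL_2(\Z)$-action established above.
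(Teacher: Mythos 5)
Your argument is correct, but it takes a genuinely different route from the paper's. You take $\SL_2(\Z)$-invariants first (legitimate: by Zariski density all terms are algebraic $\SL_2$-representations, so the invariants functor is exact and commutes with the cokernel of $\ud$) and then invoke the first fundamental theorem for $\SL_2 = \mathrm{Sp}_2$: using K\"unneth and $H^2(U)=0$, the positive-degree invariants of $H^\bullet(U^n)$ are spanned by matching monomials $\prod_l \pr_{i_lj_l}^\ast\Delta$, and each such monomial equals $\ud\bigl(\omega_{i_1j_1}\cdot\prod_{l\geq 2}\pr_{i_lj_l}^\ast\Delta\bigr)$ on the nose, since $\ud$ is a derivation vanishing on $H^\bullet(U^n)$. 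Note that the ``main obstacle'' you anticipate is in fact vacuous: because you exhibit explicit preimages rather than arguing modulo relations, no correction terms involving $\omega_{ij}(\pr_i^\ast\alpha-\pr_j^\ast\alpha)=0$ can arise. The paper instead stays $\sym_{p}$-equivariant before taking invariants: it reduces to $E^{p,0}_U(p)=H^1(U)^{\otimes p}$ via Proposition \ref{hejhej}, applies Schur--Weyl duality, and observes that any class fixed by a transposition $(ij)$ is $\pr_{ij}^\ast\Delta$ times something (as $H^2(U^2)^{\sym_2}$ is spanned by the diagonal class) and hence a coboundary; since the sign representation is the only irreducible of $\sym_p$ with no transposition-fixed vectors, everything except $\Sym^pH^1(U)\otimes\sgn_p$ dies in cohomology, and $\Sym^p\V$ has no invariants for $p>0$. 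Both proofs pivot on the same identity --- $\pr_{ij}^\ast\Delta\cdot\beta$ is exact --- but the paper's equivariant version yields the stronger conclusion that the whole cohomology of the bottom row is a quotient of $\Sym^pH^1(U)\otimes\sgn_p$, a fact that is explicitly reused in the proof of Proposition \ref{p2}; your invariants-only statement suffices for Proposition \ref{p1} but would not feed into that later argument as directly. One bookkeeping slip: since $\ud\omega_{ij}=\pr_{ij}^\ast\Delta$ raises the first degree by $2$ and lowers the second by $1$, the incoming differential is $\ud\colon E^{p-2,1}_U(n)\to E^{p,0}_U(n)$, not $E^{p-1,r-1}_U(n)\to E^{p,0}_U(n)$; this does not affect the substance of your argument, which never uses the precise bidegree.
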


\begin{proof}There is nothing to prove unless $p \leq n$. By Proposition \ref{hejhej}, every class in $E^{p,0}_U(n)$ is pulled back from $E^{p,0}_U(p)$, so it suffices to show that $H^{p,0}E_U(p)$ contains no $\SL_2(\Z)$-invariant classes for $p > 0$. Schur--Weyl duality gives the decomposition
\[ E^{p,0}_U(p) = H^1(U)^{\otimes p} \cong \bigoplus_{\lambda \vdash p }\S^{\lambda}( H^1(U)) \otimes V_{\lambda^T}.\]
Here $\S^\lambda$ and $V_\lambda$ denote the Schur functor and $\sym_p$-representation  corresponding to the partition $\lambda$, respectively, and $\lambda^T$ is the conjugate of the partition $\lambda$. (We get the conjugate partition because $H^1(U)$ is in odd degree.) Take $\alpha \in E^{p,0}(p)$, and suppose that there exists a transposition $\tau = (ij) \in \sym_p$ such that $\tau(\alpha) = \alpha$. Since the $\sym_2$-invariant subspace of $H^2(U^2)$ is spanned by the class of the diagonal, this implies that we can write $\alpha = \pr_{ij}^\ast (\Delta) \cdot \beta$, and then $\alpha = \ud (\omega_{ij}\cdot \beta)$. The only representation of $\sym_p$ which does not contain a copy of the trivial representation when restricted to $\sym_2$ is the alternating one, so all summands except $\Sym^{p}H^1(U) \otimes \sgn_p$ vanish in cohomology. \end{proof}

\begin{prop} \label{p2}Similarly, $H^{p,1}E_U(n)^{\SL_2(\Z)} \neq 0 \iff p=2, n \geq 3$.\end{prop}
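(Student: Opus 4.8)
The plan is to compute the $\SL_2(\Z)$-invariants directly at the level of the complex $E_U(n)$. Because $\SL_2(\Z)$ is Zariski dense in $\SL_2$ and all representations in sight are restrictions of algebraic representations of $\SL_2$, the functor $(-)^{\SL_2(\Z)}$ agrees with $(-)^{\SL_2}$, is exact, and commutes with taking cohomology; so it suffices to compute the cohomology of the invariant subcomplex $W^{\bullet,\bullet}=E_U(n)^{\SL_2(\Z)}$. By the first fundamental theorem of invariant theory for $\SL_2$, the invariants of a tensor power of $\V=H^1(U)$ are spanned by perfect matchings, each matched pair being contracted by the symplectic form $\Delta\in\wedge^2\V$. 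Hence a basis of $W^{p,q}$ is given by graphs on $[n]$ with $q$ dashed edges (the $\omega_{ij}$) and $p/2$ solid edges (the matched pairs); in particular $W^{p,q}=0$ for odd $p$, and the dashed-degree $q$ is the number of $\omega$'s.

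Then I would record the combinatorial rules forced by the relations together with $H^2(U)=0$. The solid edges form a partial matching, and since two classes on the same factor multiply into $H^2(U)=0$, no vertex --- and, using $\omega_{ij}(\pr_i^\ast\alpha-\pr_j^\ast\alpha)=0$, no connected cluster of dashed edges --- can meet two solid edges or carry a solid edge inside it. A dashed monomial containing a cycle vanishes by the Arnold relation, so the dashed edges form a forest. Contracting each dashed tree, the solid edges give a matching on the resulting blobs, and it follows that every connected component is a spanning tree on its vertex set carrying at most one solid edge: either all edges dashed (type (a)) or a single solid bridge joining two dashed subtrees (type (b)). The differential $\d\omega_{ij}=\Delta_{ij}$ makes one dashed edge solid at a time; solidifying an edge whose blob already carries a solid edge produces a factor with $H^2(U)=0$ and so vanishes, whence $\d$ sends type (a) to type (b) and preserves the partition of $[n]$ into components.

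Consequently $W^{\bullet,\bullet}$ splits, as a complex, into a direct sum over partitions of subsets of $[n]$ into blocks of size $\geq 2$, and on each summand it is the tensor product over blocks $P$ of a local complex $\mathcal K_P$ (this is the induced-representation structure alluded to in the remark after Proposition \ref{hejhej}). By Künneth its cohomology is the tensor product of the $H_\bullet(\mathcal K_P)$, where $\mathcal K_P$ is concentrated in dashed-degrees $|P|-2$ and $|P|-1$. A short computation gives $\mathcal K_P$ acyclic for $|P|=2$; for $|P|=3$ the Arnold relation makes $\d$ injective from the $2$-dimensional degree-$2$ part into the $3$-dimensional degree-$1$ part, so $H_\bullet(\mathcal K_P)=\Q(-2)$ in degree $1$; and for $|P|\geq 4$ the cohomology lives in degrees $\geq|P|-2\geq 2$. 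Since $H^{p,1}$ is the dashed-degree-one part of the total cohomology, a block of size $2$ kills its term and a block of size $\geq 4$ pushes the degree above one, so the only surviving contributions come from exactly one block of size $3$ with all other points isolated. This forces $p=2$, and then $H^{2,1}(W)=\Q(-2)^{\oplus\binom{n}{3}}$, nonzero precisely for $n\geq 3$.

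The main obstacle is the second step: extracting from the relations of $E_U(n)$, the vanishing of $H^2(U)$, and the Arnold relations the clean statement that connected components are decorated spanning trees, and checking that $\d$ acts as claimed on blobs. Once this graph-complex description is in place the component decomposition and the three local computations are routine. The nonvanishing for $n\geq 3$ can in fact be seen with no computation at all: the relevant class already appears as the summand $\Q(-2)$ of $H^{2,1}E_U(3)^{\SL_2(\Z)}$ computed in Example \ref{ex}, and the inclusion $E_U(3)\hookrightarrow E_U(n)$ is $\SL_2(\Z)$-equivariant and injective on cohomology.
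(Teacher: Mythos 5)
There is a genuine gap, and it is fatal to the ``only if'' direction of the statement. In your first step you invoke the first fundamental theorem to say that the $\SL_2$-invariants are \emph{spanned} by matching diagrams, but you then silently treat these diagrams as a \emph{basis} of $W^{p,q}$. Since $\dim H^1(U)=2$, the second fundamental theorem produces linear relations among matchings as soon as four blobs carry $H^1$-decorations: the three perfect matchings $\Delta_{12}\Delta_{34}$, $\Delta_{13}\Delta_{24}$, $\Delta_{14}\Delta_{23}$ span the $2$-dimensional space $\left(H^1(U)^{\otimes 4}\right)^{\SL_2}$ and hence satisfy one relation. These relations identify diagrams whose solid edges determine \emph{different} partitions of $[n]$, so your claimed splitting of $W^{\bullet,\bullet}$ as a direct sum of complexes over partitions is false, and the K\"unneth computation built on it collapses. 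What you have actually computed is the cohomology of a free diagram model that surjects onto the true invariant complex; the kernel (the matching relations) contributes extra cohomology that your count misses.

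The error is already visible, quantitatively, at $n=4$: your formula gives $H^{2,1}(W)=\Q(-2)^{\oplus\binom{4}{3}}$, of dimension $4$, whereas the correct dimension is $5$ --- this is exactly the space $\gr^W_4 H^3 F(U,4)^{\SL_2(\Z)} \cong H^3(\M_{1,5}) = V_5\oplus V_{41}$ appearing in Proposition \ref{p3}. The missing fifth class is created by the relation itself: a suitable signed combination of $\omega_{12}\Delta_{34}$, $\omega_{13}\Delta_{24}$, $\omega_{14}\Delta_{23}$ is closed in the actual complex (its differential is the matching relation) but not closed in your free model, so it never enters your bookkeeping. Worse, the same mechanism could a priori produce classes in $H^{p,1}$ for $p\geq 4$ (one dashed edge times a matching relation on the remaining points), so the vanishing for $p\neq 2$ --- the real content of the proposition --- is unproven by your method. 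This is precisely the difficulty the paper's proof is designed to handle: Schur--Weyl duality encodes $\dim \V = 2$ correctly (since $\S^\lambda(H^1(U))=0$ once $\lambda$ has more than two rows), Pieri's formula isolates the candidates $V_{3,2^{k-1},1}\oplus V_{4,2^{k-1}}$, and these are then killed by the observation that $F(U,n)\cong F(E,n+1)/E$ forces $H^{p,1}E_U(p+2)^{\SL_2(\Z)}$ to be the restriction of a $\sym_{p+3}$-module, checked against the branching rule --- the step your argument omits entirely. Your closing remark is the one part that stands: nonvanishing for $p=2$, $n\geq 3$ does follow from Example \ref{ex} together with the injectivity in cohomology of $E_U(3)\hookrightarrow E_U(n)$, since taking $\SL_2$-invariants is exact.
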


\begin{proof}When $n = 3$ this is Example \ref{ex}, so $n \geq 4$ with no loss of generality. Again by Proposition \ref{hejhej}, it suffices to consider $H^{p,1}E_U(p+2)$. We have the decomposition
\begin{align*} 
E^{p,1}_U(p+2) \cong & \Ind_{\sym_2 \times \sym_{p}}^{\sym_{p+2}} H^0(U) \boxtimes H^p(U^p) \\
  \oplus & \Ind_{\sym_2 \times \sym_{p}}^{\sym_{p+2}} H^1(U) \boxtimes H^{p-1}(U^p).
\end{align*}

The second summand is spanned by classes of the form $\alpha \cdot \beta$, where 
\[ \alpha = \omega_{ij} \cdot \pr_i^\ast(\alpha'),\] 
with $\alpha' \in H^1(U)$, and $\beta$ is a class pulled back from $E^{p-1,0}(p-1)$.  Clearly $\ud \alpha = 0$ (as $H^3(U^2)=0$), so if $\beta$ is a coboundary then $\alpha\beta$ is, too. Hence by the preceding proposition, we get a nonzero contribution in cohomology only if $\beta$ comes from the submodule  $\Sym^{p-1}H^1(U) \subset E_U^{p-1,0}(p-1)$. Now the tensor product $H^1(U) \otimes \Sym^{p-1}H^1(U)$ can only contain an $\SL_2(\Z)$-invariant class if $p=2$ (recall from \cite[Lecture 11]{fh91} how representations of $\SL_2$ are multiplied), which gives what we wanted. 

We consider now the first summand. Let $p=2k$ (if $p$ is odd there are clearly no invariant classes). According to the Schur--Weyl decomposition, the $\SL_2(\Z)$-invariants of $H^p(U^p)$ can be written as $\Q(-k) \otimes V_{2^k}$. Hence by Pieri's formula, the $\SL_2(\Z)$-invariants of the first summand can be written as 
\[ \Q(-k-1) \otimes (V_{2^{k+1}} \oplus V_{3,2^{k-1},1} \oplus V_{4,2^{k-1}}).\]
(If $k=0$ the second two terms are zero.) Now the differential maps $E^{p,1}_U(p+2)$ to $E^{p+2,0}_U(p+2)$, and the latter contains a copy of $\Q(-k-1) \otimes V_{2^{k+1}}$ which must vanish in cohomology by our preceding result. Hence only the terms $ V_{3,2^{k-1},1} \oplus V_{4,2^{k-1}}$ could possibly be nonzero in cohomology. 

But note also now that the $\sym_{p+2}$-module $H^{p,1}E_U(p+2)^{\SL_2(\Z)}$ is in fact the restriction of a $\sym_{p+3}$-module, since we have the $\SL_2(\Z)$-equivariant isomorphism $F(U,n) \cong F(E,n+1)/E$ for any $n$. An easy combinatorial check with the branching formula for $\sym_n$ shows that neither $ V_{3,2^{k-1},1}$, $V_{4,2^{k-1}}$ nor their sum can be written as such a restriction, except for the module $V_{4}$ when $k=1$. Hence again we can get a nonzero contribution only when $p=2$. \end{proof}

This result very nearly proves also the `relations' part of the proof. However, we wanted to show that $\gr^W_4 H^3F(U,n)^{\SL_2(\Z)}$ is spanned by classes that are pulled back from $H^3F(U,3)$ along the $\binom {n+1}{4}$ forgetful maps obtained by the identification $F(U,n) \cong F(E,n+1)/E$. But since we have so far only taken into account those forgetful maps that preserve the origin, we can not get a sharp enough result: Proposition \ref{hejhej} only yields that all classes in $\gr^W_4 H^3F(U,n)$ are pulled back from $F(U,4)$. To finish the proof, we must show the following:

\begin{prop}\label{p3}The (5-dimensional) space 
$\gr^W_4 H^3 F(U,4)^{\SL_2(\Z)}$ is spanned by classes pulled back from the (1-dimensional) space $\gr^W_4 H^3 F(U,3)^{\SL_2(\Z)}$ along the five natural forgetful maps. 
\end{prop}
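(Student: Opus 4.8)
The plan is to exploit the enlarged symmetry coming from the identification $F(U,4) \cong F(E,5)/E$, which upgrades the obvious $\sym_4$-action on $\gr^W_4 H^3 F(U,4)^{\SL_2(\Z)}$ to an action of $\sym_5$ permuting the five points. First I would record the module structures on both sides. From the analysis in the proof of Proposition \ref{p2} (the case $p=2$, $k=1$) one reads off that the five-dimensional target $\gr^W_4 H^3 F(U,4)^{\SL_2(\Z)}$ is, as an $\sym_5$-representation, isomorphic to $(V_5 \oplus V_{4,1})(-2)$: the constituent $V_{4,1}$ comes from the first summand of $E^{2,1}_U(4)$ and the extra copy of the trivial representation $V_5$ from the second, since as an $\sym_4$-module the total is $2V_4 \oplus V_{3,1}$ and $V_5 \oplus V_{4,1}$ is the unique $\sym_5$-module with that restriction. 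On the other side, $\gr^W_4 H^3 F(U,3)^{\SL_2(\Z)}$ is one-dimensional and, since Getzler's relation is symmetric, carries the trivial $\sym_4$-action; hence the source of the five pullbacks is $\Ind_{\sym_4}^{\sym_5}(\text{triv}) \cong V_5 \oplus V_{4,1}$, the permutation representation on the five points. That the two $\sym_4$-characters match — trivial rather than sign — is already a necessary consistency check, since for the sign representation the induced module would be $V_{1^5} \oplus V_{2,1,1,1}$ and the statement would fail.

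The five forgetful maps $F(E,5)/E \to F(E,4)/E$ are permuted simply transitively-up-to-stabilizer by $\sym_5$ (with stabilizer $\sym_4$), so they assemble into a single $\sym_5$-equivariant pullback homomorphism $\Phi \colon V_5 \oplus V_{4,1} \to V_5 \oplus V_{4,1}$. As both irreducible constituents occur with multiplicity one, Schur's lemma forces $\Phi$ to act as a scalar $a$ on the $V_5$-component and a scalar $b$ on the $V_{4,1}$-component, and the proposition becomes equivalent to the non-vanishing of $a$ and $b$.

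For the constituent $V_{4,1}$ I would argue using only the four forgetful maps that preserve the origin. These are precisely the maps inducing the inclusions of dg subalgebras $E_U(3)\hookrightarrow E_U(4)$, which are injective on cohomology. Restricting the $\sym_5$-picture along the copy of $\sym_4$ fixing the origin, the span of these four pullbacks is an image of $\Ind_{\sym_3}^{\sym_4}(\text{triv}) \cong V_4 \oplus V_{3,1}$, and its $V_{3,1}$-part can only map into the $V_{4,1}$-isotypic component of the target, as $\Res^{\sym_5}_{\sym_4} V_5 = V_4$ contains no $V_{3,1}$. Thus $b \neq 0$ follows once one checks that this $V_{3,1}$ survives, i.e.\ that the four origin-preserving pullbacks are linearly independent; this is accessible from the injectivity of the subalgebra inclusions together with a short inspection of which $\omega_{ab}$ can occur in each.

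The main obstacle is the scalar $a$, that is, the trivial constituent $V_5$: this is exactly the part of the target that is invisible to the $\sym_4$-equivariant machinery of Proposition \ref{hejhej}, and it is reached only by genuinely using the fifth map, the one forgetting the origin. Concretely, $a \neq 0$ is equivalent to the totally symmetric sum $\sum_i f_i^\ast(g)$ of the five pullbacks of the Getzler class $g$ being nonzero, and the cleanest reformulation is that the origin-forgetting pullback $f_0^\ast(g)$ is not a scalar multiple of the symmetrization $\sum_{i \neq 0} f_i^\ast(g)$ of the origin-preserving ones, both being $\sym_4$-invariant classes. I expect this to require an explicit computation: represent $g$ by an honest cocycle in $E^{2,1}_U(3)$ (a symmetric combination of terms $\omega_{ij}\,\pr_k^\ast(x)\,\pr_l^\ast(y)$), transport the origin-forgetting map through the isomorphism $F(U,4) \cong F(E,5)/E$, and verify that the two resulting cocycles are linearly independent modulo coboundaries in $E_U(4)$. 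Once $a \neq 0$ and $b \neq 0$ are established, $\Phi$ is an isomorphism of $\sym_5$-modules and the five pullbacks span the target, as required.
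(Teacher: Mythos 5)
Your proposal is correct in outline and shares its skeleton with the paper's own proof: both exploit the $\sym_5$-action coming from $F(U,4)\cong F(E,5)/E$, identify the target as the multiplicity-free module $V_5\oplus V_{4,1}$, note that the five pullbacks assemble into a $\sym_5$-map out of the permutation module $\Ind_{\sym_4}^{\sym_5}(\mathrm{triv})\cong V_5\oplus V_{4,1}$, and reduce the statement to nonvanishing on the two constituents --- in the paper's phrasing, the proposition can only fail if the $\sym_5$-symmetrization of the pullback of Getzler's relation vanishes. The differences lie in how the two sub-steps are discharged. For the module structure, you derive $V_5\oplus V_{4,1}$ from the $\sym_4$-restriction $2V_4\oplus V_{3,1}$ extracted from the proof of Proposition \ref{p2}, plus a branching-uniqueness argument; this is valid once the 5-dimensionality is granted (which upgrades the upper bound from Proposition \ref{p2} to an equality), and it is more self-contained than the paper, which computes the dimension by hand and quotes the $\sym_5$-decomposition of $H^3(\M_{1,5})$ from \cite[Table 2]{getzler99}. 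For the crux --- your scalar $a$ on the trivial constituent --- you propose an explicit cochain computation in $E_U(4)$, transporting the origin-forgetting map through $F(U,4)\cong F(E,5)/E$; this is the genuinely delicate point of your plan, since that fifth map is \emph{not} induced by a dg-subalgebra inclusion $E_U(3)\subset E_U(4)$, so it must be realized at cocycle level and compared modulo coboundaries, and you leave this verification unexecuted. The paper sidesteps the cochain level entirely: since these classes are exactly relations among boundary strata, it suffices to symmetrize the pullback of the explicit boundary expression of Getzler's relation to $\MM_{1,5}$ (pullback of strata classes along forgetful maps being purely combinatorial) and observe by direct inspection of the formula in \cite{ellipticgw} that the result is not the zero relation. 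Your $b\neq 0$ step, via linear independence of the four origin-preserving pullbacks and the multiplicity-one placement of $V_{3,1}$ inside $\Res^{\sym_5}_{\sym_4}V_{4,1}$, corresponds to the paper's unelaborated claim that there are ``at least four linearly independent pullbacks''; both treatments leave a comparable amount of checking to the reader there. In short: same strategy, with your version buying independence from \cite{getzler99} at the price of a cochain-level nonvanishing computation that the paper's strata-level inspection makes essentially free.
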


\begin{proof}Although a direct argument is possible, we give a quick but indirect proof. It is not hard to compute by hand with the Cohen--Taylor sequence that $\gr^W_4 H^3 F(U,4)^{\SL_2(\Z)}$ is in fact 5-dimensional. One then sees from \cite[Table 2]{getzler99} that the $\sym_5$-decomposition of $H^3(\M_{1,5}) \cong \gr^W_4 H^3 F(U,4)^{\SL_2(\Z)}$ is $V_5 \oplus V_{41}$. Since we can find at least four linearly independent pullbacks, it follows that the only way the proposition could fail is if the $\sym_5$-symmetrization of the pullback of Getzler's relation to $\MM_{1,5}$ vanishes identically (i.e.\ it is the zero relation). Direct inspection of the explicit expression for Getzler's relation in \cite{ellipticgw} shows that this is not the case. \end{proof}

Theorems \ref{thma}, \ref{thmb} and \ref{thmc} follow now from Propositions \ref{red}, \ref{p1}, \ref{p2} and \ref{p3}.

\bibliographystyle{alpha}

\bibliography{../database}

\end{document}